\newtheorem{theo}{Theorem}
\newtheorem{lemma}[theo]{Lemma}
\theoremstyle{remark}
\begin{document}
\doublespace

\title{ Units in $FD_{2p}$}

\author{ Kuldeep Kaur, Manju Khan}
\email{ kuldeepk@iitrpr.ac.in,manju@iitrpr.ac.in}

\address{Department of Mathematics, Indian Institute of Technology
Ropar,Nangal Road, Rupnagar - 140 001 }

\footnotetext{}
\keywords{Unitary units ; Unit Group; Group algebra.}

\subjclass[2000]{16U60, 20C05}

\maketitle

\markboth{ Kuldeep Kaur, Manju Khan}{Units in $FD_{2p}$}

\doublespacing
\begin{abstract}
In this paper, we present the structure of the group of
$*$-unitary units in the group algebra $FD_{2p}$, where $F$ is a finite field of
characteristic $p > 2$ , $D_{2p}$ is the dihedral group of order $2p$, and $*$ is the
canonical involution of the group algebra $FD_{2p}$. We also provide the
structure of the
maximal $p$-subgroup of the unit group $\mathscr{U}(FD_{2p})$ and compute a
basis of its
center.

\end{abstract}
\section*{introduction}
Let $FG$ be the group algebra of a group $G$ over a field $F$. For a normal
subgroup $H$ in $G$, the natural homomorphism $G \rightarrow G/H$ can be
extended to an algebra homomorphism from $FG$ to $F[G/H]$, defined by
$\displaystyle \sum_{g \in G} a_g g \mapsto \sum_{g \in G}a_g gH$. The kernel of
this homomorphism, denoted by $\Gamma(H)$, is the ideal generated by $\{ h -1 \
| \ h \in H \}$. Therefore,  $FG/ \Gamma(H) \cong F[G/H]$. In particular, for $H
= G$, $\Gamma(G)$ is known as the augmentation ideal of the group algebra $FG$.
Since $FG/\Gamma(G)\cong F$, it follows that the Jacobson radical $J(FG)$ is
contained in $\Gamma(G)$. The equality occurs if $G$ is a finite $p$-group and
$F$
is a field of characteristic $p$. In this case, $1 + \Gamma(G)$ is the same as
the
normalized unit group $V(FG)$ of the group algebra $FG$.   

For an ideal $I \subseteq J(FG)$, the natural homomorphism $FG \rightarrow FG/I$
induces an epimorphism from the unit group $\mathscr{U}(FG)$ to
$\mathscr{U}(FG/I)$, with kernel $1 + I$. Hence, $\mathscr{U}(FG)/ 1 + I \cong
\mathscr{U}(FG/ I)$.

If $\displaystyle x = \sum_{g \in G} x_gg $ is an element of $FG$, then the
element
$\displaystyle x^* = \sum_{g \in G} x_gg^{-1}$ is called the conjugate of
$x$. The map $x \mapsto x^*$ is an anti-automorphism of $FG$ of order 2, which
is
known as the canonical involution of the group algebra $FG$. 
An element  $x \in \mathscr{U}(FG)$ is called unitary if $x^*=x^{-1}$. The
unitary units of the unit group $\mathscr{U}(FG)$ form a subgroup
$\mathscr{U}_*(FG)$, and is called unitary subgroup of $\mathscr{U}(FG)$.

Let R be a ring. For $x, y \in R$, the operation $\circ$ in R can be defined by
$x \circ y =x+y+xy$. Clearly, $(R,\circ)$ is a monoid with 0 as the identity
element.
The elements invertible under
this operation are called quasi regular. Let $R^{\circ}$ be the set of all the
quasi regular elements of R. It is clear that $R^{\circ}$ is a group under the
operation $\circ$ and it is known as the
circle group of the ring R.  It is known that the map $r \rightarrow r-1$
defines a one-to-one correspondence
between subgroups of $\mathscr{U}(R)$ and subgroups of $R^{\circ}$.

If $G$ is a finite $p$-group and $F$ is a finite field of characteristic $p$,
then $\mathscr{U}(FG) = V(FG)\times F^*$, where $F^*$ is the cyclic group of all
nonzero elements of $F$. Sandling in\cite{MR761637} computed a basis
for the normalized unit group $V(FG)$, if $G$ is finite abelian $p$-group and
$F$ is a finite field with $p$ elements.  In
\cite{MR1136226}
Sandling provided generators and relators for each 2-group of order dividing
16 over finite field with 2 elements. Later on, Creedon and Gildea in
\cite{MR2884238} described the structure of $V(FD_8)$, where $F$ is a finite
field
of characteristic 2 and $D_8$ is the dihedral group of order $8$. \\
Sharma et.
al.\cite{FS3} described the structure of the unit group $\mathscr{U}(FD_{2p})$
of the dihedral group $D_{2p}$ for $p = 3$, over any finite
field $F$. For $p =5$, the structure of the unit group 
$\mathscr{U}(FD_{10})$ has been determined by Khan in
\cite{FD10}. Gildea in \cite{MR2534014} studied some properties of the center of
the maximal $p$-subgroup of the unit group  $\mathscr{U}(FD_{2p})$ over a finite
field $F$.
However, basis ot its center has not been yet determined.

The set all unitary units in the normalized unit group $V(FG)$ forms a subgroup
of $\mathscr{U}_*(FG)$. We
denote it by $V_*(FG)$. 
The unitary subgroup $\mathscr{U}_*(FG)$ coincides with $V_*(FG)$ if $F$ is a
finite field of
 characteristic 2. Otherwise, it coincides with $V_*(FG) \times \langle -1
\rangle$. Therefore, $V_*(FG)$ plays an important role in determining the
structure
of the unitary subgroup. Bovdi and Sakach in
\cite{FPG} described the structure of $V_*(FG)$, where $G$ is a finite abelian
$p$-group and $F$ is a finite field of characteristic
$p$. Bovdi and Erdei in \cite{F2D8} provided the structure of the
 unitary subgroup $V_*(F_2G)$, where $G$ is a nonabelian group of order 8 and
16.
 V. Bovdi and Rosa in \cite{order} computed the order of the unitary
subgroup of the
 group of units, when $G$ is either an extraspecial 2-group or the central
product of
 such a group with a cyclic group of order 4, and $F$ is a finite field of
characteristic 2 .They
 also computed the order of the unitary subgroup $V_*(F G)$, where $G$ is a
2-group
 with a finite abelian subgroup $A$ of index 2 and an element $b$ such that $b$
 inverts every element in $A$ and the order of $b$ is 2 or 4. If order of $b$ is
4, then V.
Bovdi and Rozgonyi in
 \cite{unitarybovdi} described the structure of $V_*(F_2G)$. Creedon and Gildea
in \cite{MR2512555} described the structure of the unitary
subgroup of the group algebra  $F_{2^k}Q_8$. 
Gildea \cite{MR2753766} provided the structure of the unitary subgroup of the
group algebra
$F_{2^k}D_8$. However, the structure of
$V_*(FD_{2p})$ is not known for a finite field of characteristic $p$.

Here, our goal is to study the structure and obtain 
generators of the unitary subgroup $\mathscr{U}_*(FD_{2p})$ for dihedral group
$D_{2p}$ of order $2p$ over a finite field $F$ of characteristic $p$. We also 
obtain a basis of the
center of the maximal p-subgroup of the unit group $\mathscr{U}(FD_{2p})$. 
Finally, we establish that the maximal $p$-subgroup of $\mathscr{U}(FG)$ is a
general product of the unitary subgroup with a metabelian group.
 
 Let $D_{2p} = \langle a,b \ | \ a^p=1= b^2, b^{-1}ab=a^{-1}\rangle$. Then, the
distinct conjugacy classes of $D_{2p}$ are $C_0=\{1\},\ C_i=\{a^i,a^{-i}\},
\mbox{for}\ 1 \leq i \leq l, \mbox{ where } l=\frac{p-1}{2} \mbox{ and } C =
\{b,ab,a^2b,\cdots , a^{p-1}b\}$. For a set $H$, where $\hat{H}$
denotes the sum of all the elements of $H$, $\{\hat{C_0},
\hat{C_1}, \hat{C_2}, \cdots , \hat{C_l}, \hat{C}\}$  forms a $F$-basis for the
center
$Z(FD_{2p})$ of the group algebra $FD_{2p}$.
                                                                                
 \section*{Unit group of $FD_{2p}$}

 \noindent If $A$ is the normal subgroup of $D_{2p}$ generated by element $a$
in $D_{2p}$, then it is clear that 
$FD_{2p}/\Gamma(A) \cong FC_2$. Since $\Gamma(A)$ is a nilpotent
ideal of
$FD_{2p}$, this implies that $\mathscr{U}(FD_{2p})/(1 + \Gamma(A)) \cong
\mathscr{U}(FC_2)$, which is isomorphic to $F^* \times F^* $. Let $\theta :
\mathscr{U}(FD_{2p}) \rightarrow \mathscr{U}(FC_2)$ be a group epimorphism
defined by \[ \sum_{i = 0}^{p -1}\alpha_ia_i + \sum_{j =
0}^{p-1}\beta_ja^jb \mapsto \sum_{i = 0}^{p -1}\alpha_i + \sum_{j =
0}^{p-1}\beta_j x,\] where
$C_2 = \langle x \rangle$. We can define a group homomorphism
$\psi:\mathcal{U}(FC_2) \rightarrow \mathcal{U}(FD_{2p})$ by 
$a_0+a_{1}x \rightarrow a_0 + a_{1}b$. Note that $\phi \circ \psi = 1$, and
therefore, 
$\mathscr{U}(FD_{2p}) \cong (1 + \Gamma(A))\rtimes F^* \times F^*
$. Now, $\Gamma(A)$ is a nilpotent ideal with index $p$. Hence, $1 +
\Gamma(A)$ is a nilpotent group with exponent $p$ and its nilpotency class is
at most $p-1$.
If order of $F$ is $ p^n$, then the order of $1 + \Gamma(A)$ is $ p^{4nl}$,
where $l
= \frac{p-1}{2}$. 

If $*$ is the canonical involution of the group algebra $FD_{2p}$, then we first
establish the structure of the unitary subgroup $\mathscr{U}_*(FD_{2p})$.

\section*{Structure of the Unitary Subgroup $\mathscr{U}_*(FD_{2p})$}
\begin{theo} If $A$ is the normal subgroup of $D_{2p}$, then the unitary
subgroup $\mathscr{U}_*(FD_{2p})$ of the group algebra $FD_{2p}$ is
the semidirect 
product of the normal subgroup $V_*(FA)$ with an elementary abelian 2-group,
where $V_*(FA)$ denotes the group of all unitary units in $V(FA)$.
\end{theo}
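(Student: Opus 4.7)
The plan is to decompose $FD_{2p}=FA\oplus(FA)b$ and write any $u\in FD_{2p}$ as $u=\alpha+\beta b$ with $\alpha,\beta\in FA$. Since $bab^{-1}=a^{-1}$, conjugation by $b$ on $FA$ coincides with the canonical involution, so $by=y^{*}b$ for every $y\in FA$; this immediately gives $(\beta b)^{*}=b\beta^{*}=\beta b$ and hence $u^{*}=\alpha^{*}+\beta b$. Expanding $uu^{*}$ and using commutativity of $FA$, I obtain
\[
uu^{*}=(\alpha\alpha^{*}+\beta\beta^{*})+2\alpha\beta\, b.
\]
The unitarity condition $uu^{*}=1$ thus splits into $\alpha\alpha^{*}+\beta\beta^{*}=1$ and $2\alpha\beta=0$; since $\mathrm{char}(F)=p>2$, the second equation becomes $\alpha\beta=0$ in $FA$.

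Next I would eliminate one of the two components. Let $\varepsilon:FA\to F$ be the augmentation. Applying $\varepsilon$ to $\alpha\beta=0$ forces $\varepsilon(\alpha)=0$ or $\varepsilon(\beta)=0$, so at least one of $\alpha,\beta$ lies in the nilpotent maximal ideal $\Gamma(A)$. On the other hand, the image of $u$ under the augmentation $FD_{2p}\to FC_{2}$ is the unit $\varepsilon(\alpha)+\varepsilon(\beta)x$, so $\varepsilon(\alpha)\pm\varepsilon(\beta)\neq 0$; combined with the previous remark this shows that exactly one of $\alpha,\beta$ has augmentation zero while the other is a unit in $FA$. Then $\alpha\beta=0$ with a unit factor forces the other factor to be $0$. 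Consequently every $u\in\mathscr{U}_{*}(FD_{2p})$ lies in $\mathscr{U}_{*}(FA)$ or has the form $\beta b$ with $\beta\in\mathscr{U}_{*}(FA)$, and in the second case $u^{2}=\beta\beta^{*}=1$, so $u$ is an involution.

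Finally I would assemble the semidirect product. Since $\mathrm{char}(F)\neq 2$, applying the introduction's observation $\mathscr{U}_{*}(FG)=V_{*}(FG)\times\langle -1\rangle$ to the ring $FA$ yields $\mathscr{U}_{*}(FA)=V_{*}(FA)\times\langle -1\rangle$; the same scalar $-1$ also absorbs the sign of $\varepsilon(\beta)$ in the case $u=\beta b$, so
\[
\mathscr{U}_{*}(FD_{2p})=V_{*}(FA)\cdot\langle -1,b\rangle .
\]
Set $E=\langle -1,b\rangle$. Then $E$ is elementary abelian of order $4$ because $b^{2}=(-1)^{2}=1$, $-1$ is central, and $b\neq\pm 1$; the subgroup $V_{*}(FA)$ is normal because $-1$ is central and $bvb^{-1}=v^{*}=v^{-1}\in V_{*}(FA)$ for every $v\in V_{*}(FA)$; and $V_{*}(FA)\cap E=\{1\}$ since the non-identity elements of $E$ either lie outside $FA$ or have augmentation $-1$. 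Putting these observations together gives the required semidirect product.

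The main obstacle is the implication $\alpha\beta=0 \Rightarrow \alpha=0$ or $\beta=0$: the ring $FA\cong F[t]/(t^{p})$ has a nilpotent maximal ideal and therefore many zero divisors, so the vanishing of a factor cannot be extracted from ring structure alone. It succeeds only because the unit hypothesis on $u$, passed through the augmentation, pins down exactly one of $\alpha,\beta$ as a unit and the other as nilpotent. This interplay between the local structure of $FA$, the odd characteristic, and the augmentation is the step I would formulate first in order to guide the rest of the argument.
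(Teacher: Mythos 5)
Your proof is correct, and it reaches the theorem by a genuinely more direct route than the paper. The paper splits the problem along the exact sequence $1 \to 1+\Gamma(A) \to \mathscr{U}(FD_{2p}) \to \mathscr{U}(FD_{2p}/\Gamma(A)) \to 1$: its Lemma~2 identifies the unitary units inside the kernel $1+\Gamma(A)$ with $V_*(FA)$ (via the factorization $v=u(1+xb)$ and an odd-exponent argument for units that are simultaneously symmetric and unitary), its Lemma~4 computes the unitary units of the quotient (they are $\pm 1+\Gamma(A)$ and $\pm b+\Gamma(A)$, extracted from the same expansion $uu^*=x_0x_0^*+x_1x_1^*+2x_0x_1b$ that you use, but read only through the augmentation), and the theorem then glues the two. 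You instead keep the exact equations $\alpha\alpha^*+\beta\beta^*=1$ and $\alpha\beta=0$ in $FD_{2p}$ itself and exploit that $FA\cong F[t]/(t^p)$ is a commutative local ring: the augmentation condition forces exactly one of $\alpha,\beta$ to be a unit, and $\alpha\beta=0$ then annihilates the other outright. That single observation replaces both lemmas and delivers the explicit form $u=\pm v$ or $u=\pm vb$ with $v\in V_*(FA)$ in one pass; in particular the paper's Lemma~2 falls out as a corollary of your description. What the paper's modular route buys is reusability --- the identification of the unitary units of $1+\Gamma(A)$ with $V_*(FA)$ is invoked again in the final theorem on the general product decomposition of $1+\Gamma(A)$ --- whereas your argument is shorter and self-contained for the statement at hand. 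One minor notational point: the nilpotent maximal ideal of $FA$ into which your non-unit component falls is the augmentation ideal $\omega(FA)$ in the paper's notation; $\Gamma(A)$ is reserved for the ideal of $FD_{2p}$ generated by $\{h-1 \mid h\in A\}$.
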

We need following lemmas: 
\begin{lemma}
The group of unitary units in $1 + \Gamma(A)$ is $V_*(FA)$, the unitary subgroup
of the normalized unit group $V(FA)$ of the group algebra $FA$. 
 \end{lemma}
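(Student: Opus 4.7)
The plan is to exploit the decomposition $FD_{2p} = FA \oplus FA\cdot b$ so that a general element of $1 + \Gamma(A)$ is written uniquely as $x = (1+\alpha) + \beta b$ with $\alpha,\beta \in \omega(A)$, the augmentation ideal of $FA$. This parametrization comes directly from the description of $\Gamma(A)$ as the kernel of $FD_{2p}\to F[D_{2p}/A]\cong FC_2$.

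Before expanding $xx^*$, I would record two identities that drive the whole computation. First, for any $\gamma\in FA$ we have $b\gamma = \gamma^{*}b$, because conjugation by $b$ inverts every power of $a$ and this coincides with the restriction of $*$ to $FA$. Second, for any $\beta\in FA$ the element $\beta b$ is self-conjugate: $(\beta b)^{*} = \sum_j \beta_j(a^j b)^{*} = \sum_j \beta_j\, b^{-1}a^{-j} = \sum_j \beta_j\, a^j b = \beta b$. Together these give $x^{*} = (1+\alpha^{*}) + \beta b$.

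Next I would compute $xx^{*}$ using the two identities above, pushing every $b$ to the right. A straightforward expansion yields
\begin{equation*}
xx^{*} \;=\; \bigl(1 + \alpha + \alpha^{*} + \alpha\alpha^{*} + \beta\beta^{*}\bigr) \;+\; 2\beta(1+\alpha)\, b,
\end{equation*}
which is a sum of an $FA$-part and an $FA\cdot b$-part. Setting $xx^{*}=1$ therefore decomposes into the two scalar equations $(1+\alpha)(1+\alpha^{*}) + \beta\beta^{*} = 1$ and $2\beta(1+\alpha)=0$ in $FA$.

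The only delicate point, and the only place where the hypothesis on the characteristic enters, is the $b$-coefficient equation $2\beta(1+\alpha)=0$. Since $\operatorname{char} F = p > 2$, the scalar $2$ is a unit in $F$; and since $\alpha\in\omega(A)$ with $\omega(A)^{p}=0$, the element $1+\alpha$ is a unit in $FA$. I would conclude $\beta=0$. The remaining equation then reads $(1+\alpha)(1+\alpha)^{*}=1$, i.e.\ $1+\alpha\in V_{*}(FA)$, so $x=1+\alpha\in V_{*}(FA)$. The reverse inclusion is immediate, since $V_{*}(FA)\subseteq 1+\omega(A)\subseteq 1+\Gamma(A)$ and its elements are unitary by definition. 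The entire argument is essentially a short direct calculation; the only step that requires a moment's care is verifying the self-conjugacy $(\beta b)^{*}=\beta b$, which is what forces the $b$-coefficient equation to have the factor $2$ rather than cancelling.
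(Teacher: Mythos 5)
Your proof is correct, and it rests on the same basic setup as the paper's: the decomposition of an element of $1+\Gamma(A)$ into an $FA$-part and an $FA\cdot b$-part, together with the identities $b\gamma=\gamma^{*}b$ and $(\beta b)^{*}=\beta b$ for $\gamma,\beta\in FA$. Where you differ is in how the $b$-component is killed. The paper first factors $v=u(1+xb)$ with $u\in V(FA)$ and $x\in\omega(FA)$, deduces from $v^{*}v=1$ that $u\in V_{*}(FA)$, and then observes that $u^{-1}v=1+xb$ is simultaneously symmetric and unitary, hence satisfies $(1+xb)^{2}=1$; since $1+\Gamma(A)$ has exponent $p$ (odd), this forces $1+xb=1$. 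You instead read off the $FA\cdot b$-coefficient of $xx^{*}=1$ directly, obtaining $2\beta(1+\alpha)=0$, and conclude $\beta=0$ because $2\in F^{\times}$ and $1+\alpha$ is a unit (as $\alpha$ lies in the nilpotent ideal $\omega(FA)$). Both arguments use $p\neq 2$ in an essential way — the paper through the odd exponent of the group, you through the invertibility of the scalar $2$ — but your version avoids the intermediate factorization and the group-exponent step, so it is somewhat more direct; the paper's version has the mild advantage of isolating the reusable fact that a symmetric unitary unit in a group of odd exponent is trivial.
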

 \begin{proof}
 Assume that $v=1+x_1+x_2b$, where $x_i \in \omega(FA)$ is an arbitrary element
of
$1 + \Gamma(A)$. Since $\omega(FA)$ is a nilpotent ideal, $1 + x_1$ is an
invertible element and thus $v$  can be written as $v=u(1+xb)$, where $u \in
V(FA)$ and $x \in \omega(FA)$. In particular,
if $v$ is a unitary unit, then from the equation $v^* v = 1$, we obtain
$u^*u+bx^*u^*uxb=1$
and
$bx^*u^*u+u^*uxb= 0$. Moreover, since $ by=y^*b$ for any $y \in FA$, we have $u 
\in V_*(FA)$. Further, note that  $u^{-1}v = 1+xb $ is a symmetric unit as well
as a unitary unit. Since the exponent of $ 1 + \Gamma(A)$ is $p$, it implies
that  $1
+ xb = 1 $ and hence $v = u$.
\end{proof}
 If $A = \langle a \rangle$ is a cyclic group of order $p$ and $F$ is a finite
field of order $p^n$,
then it is known that order of the group $V_*(FA)$ is $p^{\frac{n(p-1)}{2}}$ .
The following lemma provides a basis for $V_*(FA)$. Let $f(x)$ be a monic
irreducible polynomial of degree $n$
over $F_p$, such that $F \cong F_p[x]/ \langle f(x) \rangle$, and let $\alpha$
be
the residue class of $x \  \textrm{modulo} \ \langle f(x)\rangle$.
\begin{lemma}
 If $u_{i, k} = 1+\alpha^i(a-1)^k$,  then
the set \[\mathscr{B}=\{u_{i, k}^*u_{i, k}^{-1} \ | \ 0 \leq i \leq (n -1), 1
\leq k \leq (p-1) \textrm{and} \ k \ \textrm{is odd} \} \]  is a basis of
$V_*(FA)$. 
\end{lemma}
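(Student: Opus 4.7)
The plan is to analyze $V_*(FA)$ through the Jennings-type filtration $V_k := V_*(FA) \cap (1 + \omega(FA)^k)$ and to compute the leading term of each proposed basis element $u_{i,k}^* u_{i,k}^{-1}$ in the associated graded.

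First, I would observe that $V(FA)$ is abelian (as $FA$ is commutative) and has exponent $p$, since $(1+z)^p = 1+z^p$ in characteristic $p$ and $\omega(FA)^p = 0$. Hence $V_*(FA)$ is an elementary abelian $p$-group, and combined with the known order $|V_*(FA)| = p^{n(p-1)/2}$ this identifies it with an $\mathbb{F}_p$-vector space of dimension $\tfrac{n(p-1)}{2} = |\mathscr{B}|$. The statement therefore reduces to showing that $\mathscr{B}$ is $\mathbb{F}_p$-linearly independent.

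Next, for an arbitrary $u = 1+y \in V_k$, the unitarity relation $u^*u = 1$ rewrites as $y + y^* = -y^*y \in \omega^{2k} \subseteq \omega^{k+1}$. Writing $y \equiv c(a-1)^k \pmod{\omega^{k+1}}$ and using the congruence $((a-1)^k)^* = (-1)^k a^{-k}(a-1)^k \equiv (-1)^k(a-1)^k \pmod{\omega^{k+1}}$ (since $a^{-k} \equiv 1 \pmod{\omega}$), the condition collapses to $c(1 + (-1)^k) \equiv 0$. For even $k$ this forces $c=0$ (using $\mathrm{char}\, F \neq 2$), so $V_k = V_{k+1}$. Hence only odd $k \in \{1,3,\dots,p-2\}$ contribute nontrivially to the filtration, each quotient having $\mathbb{F}_p$-dimension at most $n = \dim_{\mathbb{F}_p}(\omega^k/\omega^{k+1})$; summing over the $(p-1)/2$ odd levels and comparing with $\dim_{\mathbb{F}_p} V_*(FA)$ forces equality $\dim_{\mathbb{F}_p}(V_k/V_{k+1}) = n$ at every odd $k$.

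Then a direct expansion for odd $k$ shows
\[
u_{i,k}^* u_{i,k}^{-1} = \bigl(1 - \alpha^i a^{-k}(a-1)^k\bigr)\bigl(1 - \alpha^i(a-1)^k + O(\omega^{2k})\bigr) \equiv 1 - 2\alpha^i(a-1)^k \pmod{1 + \omega^{k+1}}.
\]
Since $\{1,\alpha,\dots,\alpha^{n-1}\}$ is an $\mathbb{F}_p$-basis of $F$ and $-2 \in F^*$, the $n$ projections $\{\overline{u_{i,k}^* u_{i,k}^{-1}}\}_{i=0}^{n-1}$ form an $\mathbb{F}_p$-basis of $V_k/V_{k+1}$ for each odd $k$. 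The standard filtration-to-basis argument then assembles these level-wise bases into an $\mathbb{F}_p$-basis of $V_*(FA)$, yielding $\mathscr{B}$. The main obstacle I anticipate is the careful bookkeeping in the parity collapse — treating a general $y \in \omega^k$ (not just one of exact degree $k$) when showing $V_k = V_{k+1}$ at even levels — a step that visibly hinges on $\mathrm{char}\, F > 2$ through the non-vanishing of $2$ in $F$.
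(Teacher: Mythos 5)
Your proof is correct. The central computation is the same one the paper uses --- the leading term of a unitary/symmetric unit modulo $\omega^{k+1}(FA)$ via $(a^{-1}-1)^k \equiv (-1)^k(a-1)^k$, with characteristic $p>2$ entering through the invertibility of $2$ --- but you package it differently. The paper argues by contradiction from a hypothetical dependence relation $\prod z_{i_j,k_j}^{s_j}=1$: the product $u=\prod u_{i_j,k_j}^{s_j}$ is then symmetric, and comparing $u$ with $u^*$ at the minimal occurring degree $k$ forces $\sum s_j\alpha^{i_j}=0$ in $F$, a contradiction; the basis property then follows from the known order of $V_*(FA)$. You instead set up the filtration $V_k=V_*(FA)\cap(1+\omega^k(FA))$, show the even-indexed quotients vanish by applying the unitarity relation $y+y^*=-y^*y$ to an \emph{arbitrary} element of $V_k$, compute that the images of $u_{i,k}^*u_{i,k}^{-1}$, namely $-2\alpha^i(a-1)^k$, give an $\mathbb{F}_p$-basis of each odd-indexed quotient, and assemble. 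The extra step you prove (the parity collapse $V_k=V_{k+1}$ for $k$ even, for general unitary units rather than just for products of the $u_{i,k}$) is not in the paper and buys something concrete: combined with your upper bound $\dim_{\mathbb{F}_p}(V_k/V_{k+1})\le n$ and the explicit generators, it re-derives the order $|V_*(FA)|=p^{n(p-1)/2}$ rather than having to cite it, so your argument is self-contained where the paper's is not. Two trivial points you leave implicit and should record: that each $u_{i,k}^*u_{i,k}^{-1}$ actually lies in $V_*(FA)$ (immediate from commutativity of $FA$), and that $1+\omega^k(FA)\to\omega^k/\omega^{k+1}$, $1+y\mapsto y$, is a homomorphism killing $1+\omega^{k+1}$, which is what makes $V_k/V_{k+1}$ an $\mathbb{F}_p$-subspace of $\omega^k/\omega^{k+1}$.
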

\begin{proof}
If $z_{i,k}=u_{i,k}^*u_{i,k}^{-1}$, then it is clear that $z_{i,k}$ is a
unitary unit.
To prove the lemma, first we establish the $p$-linear independence of the
elements of the given set $\mathscr{B}$. Assume that $z_{i_1,k_1},z_{i_2,k_2},
\cdots , z_{i_n,k_n}$ are distinct elements of $\mathscr{B}$ and
${z_{i_1,k_1}}^{s_1}{z_{i_2,k_2}}^{s_{2}} \cdots {z_{i_n,k_n}}^{s_n}=1$, such
that $ 1
\leq s_i \leq (p-1)$. Then, the element $u=
{u_{i_1,k_1}}^{s_1}{u_{i_2,k_2}}^{s_{2}}
\cdots {u_{i_n,k_n}}^{s_n}$ is a symmetric unit. Further, note that
$({u_{i_t,k_t}}^{s_t})^*= ((1+\alpha^{i_t}(a-1)^{k_t})^{s_t})^* \equiv 1- s_t
\alpha^{i_t}(a-1)^{k_t} \mod \ \omega^{k_{t}+1}(FA)$. If $k=min\{k_1, k_2,
\cdots , k_n \}$, then $(a-1)^{k_i} \in  \omega^{k+1}(FA)$ for $k_i > k$.
Therefore, $u^*=u \mod \
\omega^{k+1}(FA)$, and hence
$((1+\alpha^{i_1}(a-1)^{k_1})^{s_1})^*((1+\alpha^{i_2}(a-1)^{k_2})^{s_2})^*
\cdots ((1+\alpha^{i_n}(a-1)^{k_n})^{s_n})^* \equiv
((1+\alpha^{i_1}(a-1)^{k_1})^{s_1})((1+\alpha^{i_2}(a-1)^{k_2})^{s_2}) \cdots
((1+\alpha^{i_n}(a-1)^{k_n})^{s_n}) \mod \ \omega^{k+1}(FA)$. If 
$k_{j_1}=k_{j_2}= \cdots = k_{j_r}=k$, where $\{j_1, j_2, \cdots , j_r\}
\subseteq \{1,2, \cdots , n\}$, then $1-x = 1+x \mod \
\omega^{k+1}(FA)$, where $x = s_{j_1} \alpha^{j_1}(a-1)^k + s_{j_2}
\alpha^{j_2}(a-1)^k + \cdots + s_{j_r} \alpha^{j_r}(a-1)^k$. It follows that $x
\in \omega^{k+1}(FA)$, i.e., $\alpha(a-1)^k \in
\omega^{k+1}(FA)$, a contradiction, because  $(a-1)^k + \omega^{k+1}(FA)$ is a
basis of $\omega^{k}(FA) / \omega^{k+1}(FA)$.
\end{proof}
 Since $A$ is a normal subgroup of $D_{2p}$, $FD_{2p}/\Gamma(A)$ is an
algebra over $F$. Also note that $\Gamma(A)$ is $*$-stable nil ideal, and
therefore the set of all unitary units in $FD_{2p}/\Gamma(A)$ form a subgroup
of 
$\mathscr{U}(FD_{2p}/\Gamma(A))$ and is denoted by
$\mathscr{U}_*(FD_{2p}/\Gamma(A))$. 

\begin{lemma}
The unitary subgroup $\mathcal{U}_*(FD_{2p}/\Gamma(A))$ of
$\mathscr{U}(FD_{2p}/\Gamma(A))$
is the group 
generated by $\{-1 + \Gamma(A),b+ \Gamma(A) \}$.
\end{lemma}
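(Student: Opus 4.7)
My plan is to exploit the natural isomorphism $FD_{2p}/\Gamma(A) \cong FC_2$ already noted in the paper, where $C_2 = \langle x\rangle$ corresponds to $b + \Gamma(A) \leftrightarrow x$. The canonical involution on $FD_{2p}$ is $*$-stable on $\Gamma(A)$ (since $A$ is inversion-closed), so it descends to an involution on $FC_2$. Under the identification above, this descended involution sends $x \mapsto x^{-1} = x$, hence is the identity on $FC_2$. Consequently a unit $u$ in the quotient is unitary if and only if $u = u^{-1}$, i.e., $u^2 = 1$.

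Next I would count such involutions. Since $\mathrm{char}(F) = p > 2$, the element $2$ is invertible in $F$, so $FC_2$ is semisimple with primitive orthogonal idempotents $e_1 = (1+x)/2$ and $e_2 = (1-x)/2$, giving $FC_2 \cong F \oplus F$. In $F \oplus F$ the equation $u^2 = 1$ has exactly four solutions $(\pm 1, \pm 1)$, so the unitary subgroup has order $4$ and is elementary abelian of exponent $2$. Pulling these four elements back through the idempotent decomposition one gets $\{1,\ -1,\ x,\ -x\}$, which in the quotient $FD_{2p}/\Gamma(A)$ correspond to $\{1+\Gamma(A),\ -1+\Gamma(A),\ b+\Gamma(A),\ -b+\Gamma(A)\}$. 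These are plainly generated by the two involutions $-1+\Gamma(A)$ and $b+\Gamma(A)$, establishing the claim.

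The only subtlety is verifying that $*$ descends to the identity on $FC_2$ rather than doing something more complicated; once that reduction is in hand, everything else is an immediate semisimple computation. I expect no significant obstacle: the argument is self-contained and does not require any of the previous lemmas, only the elementary structure of $FC_2$ in odd characteristic.
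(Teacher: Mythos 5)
Your proof is correct, and it takes a genuinely cleaner route than the paper's. The paper lifts a unitary unit of the quotient to $u=x_0+x_1b$ in $FD_{2p}$, computes $uu^*=x_0x_0^*+x_1x_1^*+2x_0x_1b$ there, and then applies the coefficient-sum map $\chi$ to extract the two equations $(\sum\alpha_i)^2+(\sum\beta_i)^2=1$ and $(\sum\alpha_i)(\sum\beta_i)=0$, which it solves case by case. You instead observe that $*$ descends to the identity involution on $FD_{2p}/\Gamma(A)\cong FC_2$ (it fixes the basis $\{1,\,b+\Gamma(A)\}$ of a commutative algebra), so that "unitary" in the quotient simply means $u^2=1$, and then diagonalize via $FC_2\cong F\oplus F$ to read off the four solutions $(\pm1,\pm1)\leftrightarrow\{\pm1,\pm x\}$. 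The two arguments produce the same pair of equations — writing $u=\alpha+\beta x$ gives $u^2=\alpha^2+\beta^2+2\alpha\beta x$, which is exactly the paper's system — but your version makes the conceptual point explicit and avoids the detour through $FD_{2p}$, while the paper's computation upstairs has the minor advantage of exhibiting the condition $u^*u\in 1+\Gamma(A)$ directly, the form in which it is reused in the proof of Theorem 1. Both correctly conclude that the unitary units of the quotient are $\{\pm1+\Gamma(A),\ \pm b+\Gamma(A)\}=\langle -1+\Gamma(A),\ b+\Gamma(A)\rangle$.
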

 \begin{proof}
 If $u+\Gamma(A)$ is a unitary unit of $\mathcal{U}(FD_{2p}/\Gamma(A))$, then 
$ (u+\Gamma(A))^*(u+\Gamma(A)) = 1 +\Gamma(A)$ and hence
$u^*u$ is a symmetric unit in $1+\Gamma(A)$ because $\Gamma(A)$ is a $*$-stable
nil
ideal.
Further, if $u=x_0+x_1b$, where $x_0, x_1 \in FA$, then $uu^*=
x_0x_0^*+x_1x_1^*+2x_0x_1b$. For $x= \displaystyle \sum_{i=0}^{p-1}
(\alpha_ia^i+\beta_ia^ib)$, we can define $\chi(x)=\displaystyle
\sum_{i=0}^{p-1} (\alpha_i+\beta_i) $. Since
$uu^*$ is an element of $1 + \Gamma(A)$, it follows that
$\chi(x_0x_0^*+x_1x_1^*)=1$ and $\chi(x_0x_1)=0$. In particular,   
 if $x_0=\displaystyle \sum_{i=0}^{p-1}\alpha_ia^i$ and $x_1=\displaystyle
\sum_{i=0}^{p-1}\beta_ia^i$, then we obtain
\begin{equation}
 \sum_{i=0}^{p-1}\alpha_i^2+\displaystyle \sum_{i=0}^{p-1}\beta_i^2+ 2
\mathop{\sum_{i,j=0}^{p-1}}_{i<j} \alpha_{i}\alpha_{j}+
2\mathop{\sum_{i,j=0}^{p-1}}_{i<j} \beta_{i}\beta_{j}=1
\end{equation}
\begin{equation}
 (\displaystyle \sum_{i=0}^{p-1}
\alpha_i)(\displaystyle \sum_{i=0}^{p-1} \beta_j)=0. 
\end{equation}
 Now if $(\displaystyle \sum_{i=0}^{p-1} \alpha_i)=0$, then from equation (1) we
obtain
$(\displaystyle \sum_{i=0}^{p-1}\beta_i)^2=1$ and hence $\displaystyle
\sum_{i=0}^{p-1}\beta_i = \pm 1$. Thus, either $x_1$ or $-x_1$ is an element of
$V(FA)$. This implies that the unitary units in $FD_{2p}/\Gamma(A)$ are $\pm b +
\Gamma (A)$.
 Further, if $(\displaystyle \sum_{i=0}^{p-1} \beta_i)= 0$, then, in a similar
way, one
can show that the unitary units in $FD_{2p}/\Gamma(A)$ are $\pm 1 + \Gamma (A)$.
\end{proof}

\textbf{Proof of the theorem}:- Note that $V_*(FA)$ is a normal subgroup of 
$\mathcal{U}_*(FG)$ and $\langle V_*(FA), b, -1\rangle \subseteq
\mathcal{U}_*(FG)$.
Now suppose that $u \in \mathcal{U}(FG)\setminus V_*(FA)$ is a unitary unit,
then 
$u+\Gamma(A)$ is a unitary unit in $\mathcal{U}(FG/\Gamma(A)) $. If
$u+\Gamma(A)= b+\Gamma(A)$,
then $u=bx$ for some $x \in V_*(A)$. Hence,
$\mathcal{U}_*(FG)=V_*(FA) \rtimes
(\langle b \rangle
\times \langle  -1  \rangle)$.

\section*{The Structure of center $Z(1+\Gamma(A))$}

 To find a basis of center $Z(1+\Gamma(A))$ of $1 + \Gamma(A)$, first we
establish the structure of the
symmetric subgroup $S_*(FA)$ of the group algebra $FA$ in the following lemma:
\begin{lemma}
 If $u_{i,k}= 1+\alpha^i (a-1)^k$, then the set
\[\mathscr{B}= \{u_{i,k}^* u_{i,k} | 0 \leq i \leq (n-1), 1 \leq k \leq (p-1)
\textrm{and } k \textrm{ is even }\}\] forms a basis of $S_*(FA)$.
\end{lemma}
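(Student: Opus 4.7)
The plan is to parallel the proof of the previous lemma: show that the $n(p-1)/2$ elements of $\mathscr{B}$ are symmetric and genuinely lie in $S_*(FA)$, establish their $p$-linear independence by working in the augmentation filtration, and then close the argument by a cardinality count.

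First I would verify that each $z_{i,k} = u_{i,k}^{*}u_{i,k}$ is symmetric, which is immediate from $(u^{*}u)^{*} = u^{*}u$. To count $|S_*(FA)|$, observe that the canonical involution restricts to an order-$2$ automorphism of the abelian $p$-group $V(FA)$, and since $p$ is odd, the standard eigenspace decomposition for an involution on an abelian group of odd order gives
\[ V(FA) = S_*(FA) \times V_*(FA). \]
Combined with $|V(FA)| = p^{n(p-1)}$ and the previously recorded $|V_*(FA)| = p^{n(p-1)/2}$, this yields $|S_*(FA)| = p^{n(p-1)/2}$, which matches the cardinality of $\mathscr{B}$.

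Since $V(FA)$ has exponent $p$, it suffices to prove $p$-linear independence. The key local computation is that for even $k$,
\[ u_{i,k}^{*}\,u_{i,k} \equiv 1 + 2\alpha^{i} (a-1)^{k} \pmod{\omega^{k+1}(FA)}, \]
because $a^{-k} \equiv 1 \pmod{\omega(FA)}$ forces $(a^{-1}-1)^{k} \equiv (a-1)^{k}$ modulo $\omega^{k+1}(FA)$, and the cross term $\alpha^{2i}(a-1)^{2k}$ already lies in $\omega^{k+1}(FA)$. Assuming a nontrivial relation $\prod_{t} z_{i_t,k_t}^{s_t} = 1$ with $1 \leq s_t \leq p-1$, set $k = \min\{k_t\}$. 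Reducing modulo $\omega^{k+1}(FA)$ and using $2 \in F^{*}$ (since $p > 2$), we obtain
\[ \Bigl(\sum_{t:\,k_t = k} s_t\,\alpha^{i_t}\Bigr)(a-1)^{k} \in \omega^{k+1}(FA). \]
Because $(a-1)^{k} + \omega^{k+1}(FA)$ is an $F$-basis of $\omega^{k}(FA)/\omega^{k+1}(FA)$ and $\{1,\alpha,\dots,\alpha^{n-1}\}$ is an $F_p$-basis of $F$, each $s_t$ with $k_t = k$ must vanish modulo $p$, contradicting $1 \leq s_t \leq p-1$.

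The only delicate step, and the main obstacle I anticipate, is making the direct product decomposition $V(FA) = S_*(FA) \times V_*(FA)$ fully rigorous; one verifies $S_*(FA) \cap V_*(FA) = \{1\}$ (a symmetric unitary unit $u$ satisfies $u^{2} = 1$, hence $u = 1$ by exponent $p$) and realizes an arbitrary $v \in V(FA)$ as a product of its symmetric and unitary parts via the odd-order square-root trick $v = (vv^{*})^{(p^{n(p-1)}+1)/2}\cdot\bigl(v\,((vv^{*})^{(p^{n(p-1)}+1)/2})^{-1}\bigr)$. Once this decomposition is in hand, the filtration argument together with the order count forces $\mathscr{B}$ to be a basis of $S_*(FA)$.
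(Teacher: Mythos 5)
Your proof is correct and follows essentially the same route the paper intends by its one-line ``similar to the proof of the lemma (3)'': reduce modulo the filtration $\omega^{k}(FA)/\omega^{k+1}(FA)$ at the minimal occurring $k$ to get $p$-linear independence, then conclude by comparing with $|S_*(FA)|=p^{n(p-1)/2}$. The only supplementary ingredient is your explicit justification of that order via the decomposition $V(FA)=S_*(FA)\times V_*(FA)$ (valid since $FA$ is commutative and $V(FA)$ has odd order), which the paper leaves implicit; this is a clean and correct way to fill that gap.
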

\begin{proof}
 Similar to the proof of the lemma(3)
\end{proof}
 
\begin{lemma}
 If $\omega_i=(a^i-a^{-i})(1 + b)$ and $\omega_i' = (a^i - a^{-i})(1 - b)$ for
$1 \leq i \leq l$, then the set $\{\omega_i, \omega_i', \omega_i\omega_i',
\omega_i'\omega_i \ | \  1\leq i \leq l\}$ is a free $F$- basis of $\Gamma(A)$
as a free $F$-module.  
\end{lemma}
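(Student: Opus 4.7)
My plan is first to observe that $\dim_F \Gamma(A) = \dim_F FD_{2p} - \dim_F F[D_{2p}/A] = 2p - 2 = 4l$, which matches the cardinality of the proposed set. Since every element of the set lies in $\Gamma(A)$ (because $(a^i - a^{-i}) \in \omega(FA) \subseteq \Gamma(A)$ and $\Gamma(A)$ is an ideal), it suffices to establish $F$-linear independence. The essential computational input is the identity $b(a^i - a^{-i}) = -(a^i - a^{-i})b$, from which a short calculation gives $\omega_i \omega_i' = 2(a^i - a^{-i})^2 (1-b)$ and $\omega_i' \omega_i = 2(a^i - a^{-i})^2 (1+b)$.

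Next I would exploit the idempotents $e_\pm = (1 \pm b)/2$, which are orthogonal and sum to $1$ since $\mathrm{char}\,F \neq 2$. They give a direct-sum decomposition $FD_{2p} = FD_{2p} e_+ \oplus FD_{2p} e_-$. From the explicit formulas above, $\omega_i$ and $\omega_i' \omega_i$ are right multiples of $(1 + b)$ and thus live in $FD_{2p} e_+$, whereas $\omega_i'$ and $\omega_i \omega_i'$ live in $FD_{2p} e_-$. The $F$-linear map $x \mapsto x(1 + b)$ is an isomorphism from $\omega(FA)$ onto $\Gamma(A) \cap FD_{2p} e_+$ (both of dimension $2l$), under which $\{\omega_i,\ \omega_i'\omega_i\}$ corresponds to $\{(a^i - a^{-i}),\ 2(a^i - a^{-i})^2 : 1 \leq i \leq l\}$. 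An analogous statement holds for the $e_-$-half, so the whole problem reduces to showing that this $2l$-element subset of $\omega(FA)$ is an $F$-basis.

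To handle that reduced problem I would split $\omega(FA)$ into its antisymmetric part (the $F$-span of $\{a^i - a^{-i} : 1 \leq i \leq l\}$, dimension $l$) and its symmetric augmentation-zero part (the $F$-span of $\{(a^j + a^{-j}) - 2 : 1 \leq j \leq l\}$, dimension $l$); this decomposition is again valid because $\mathrm{char}\,F \neq 2$. The elements $a^i - a^{-i}$ already form the antisymmetric basis, and $(a^i - a^{-i})^2 = (a^{2i} + a^{-2i}) - 2$ can be written as $(a^{j_i} + a^{-j_i}) - 2$, where $j_i \in \{1,\ldots,l\}$ is the unique representative of the $\pm$-class of $2i$ modulo $p$.

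The only non-mechanical step, and thus the main obstacle, is checking that $i \mapsto j_i$ is a bijection of $\{1,\ldots,l\}$ onto itself. This follows because multiplication by $2$ is invertible modulo the odd prime $p$ and commutes with inversion, so it permutes the $l$ unordered pairs $\{j,-j\}$ with $1 \leq j \leq l$. Once the bijection is in hand, the elements $(a^{j_i} + a^{-j_i}) - 2$ are a permutation of the chosen basis of the symmetric augmentation-zero part, linear independence is immediate, and the lemma follows.
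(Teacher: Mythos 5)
Your argument is correct, and it reaches the lemma by a genuinely different route than the paper. Both proofs share the same starting ingredients --- the dimension count $\dim_F\Gamma(A)=4l$ and the product formulas $\omega_i\omega_i'=2(a^i-a^{-i})^2(1-b)$, $\omega_i'\omega_i=2(a^i-a^{-i})^2(1+b)$ --- but the paper then proves \emph{spanning}: it writes each element of the known basis $\{(a^t-1),(a^t-1)b\}$ explicitly as a linear combination of the $\omega$'s, with a four-way case analysis according to whether $t=2i$ or $t=p-2i$ and whether $0<2i\leq l$ or $l<2i\leq p-1$. You instead prove \emph{independence}: the Peirce decomposition with respect to $e_\pm=(1\pm b)/2$ splits the $4l$ elements into two halves of size $2l$ lying in complementary summands, the map $x\mapsto x(1\pm b)$ transports each half into $\omega(FA)$, and the symmetric/antisymmetric splitting there reduces everything to the fact that multiplication by $2$ permutes the unordered pairs $\{j,-j\}$ modulo $p$. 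That last observation is exactly what the paper's case analysis is verifying by hand, so your version absorbs the bookkeeping into one conceptual step; it is shorter, less error-prone, and anticipates the idempotent decomposition the paper uses later for the structure of $1+\Gamma(A)$. What the paper's version buys in exchange is the explicit inverse change-of-basis formulas, which are available for concrete computation. One small point worth making explicit in your write-up: the identification $\Gamma(A)\cap FD_{2p}e_+=\omega(FA)(1+b)$ (so that both sides really do have dimension $2l$) deserves a line of justification, e.g.\ by noting that $FD_{2p}e_+=FA(1+b)$ and that $x(1+b)\in\Gamma(A)$ exactly when $x$ has augmentation zero; but this is routine and does not affect the validity of the argument.
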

\begin{proof}
It is known
that the set $\{ (a^i - 1), (a^i - 1)b \ | \ 1 \leq i \leq 2l\}$ is a free
$F$-basis of $\Gamma(A)$ as a free $F$-module. 
 Observe that $\omega_i\omega_j = 0, \omega_i'\omega_j' = 0$  for $1 \leq
i, j, \leq l$. Also note that $\omega_i \omega_i' = 2(a^{2i} + a^{-{2i}} - 2)(1
- b)$ and  $\omega_i' \omega_i = 2(a^{2i} + a^{-{2i}} - 2)(1+ b)$.\\
Thus, if $t=2i$, then 

\begin{equation*}
\left. \begin{aligned}
 (a^t-1)&= \frac{1}{4}(\omega_{2i}+\omega_{2i}')+\frac{1}{8}
(\omega_i\omega_i'+\omega_i'\omega_i)\\
(a^t-1)b&= \frac{1}{4}(\omega_{2i}-\omega_{2i}')-\frac{1}{8}
(\omega_i\omega_i'-\omega_i'\omega_i)
\end{aligned} 
\right\} 0 < 2i \leq l
\end{equation*}
\begin{equation*}
\left.\begin{aligned}
 (a^t-1)& =-\frac{1}{4}(\omega_{p-2i}+\omega_{p-2i}')+\frac{1}{8}
(\omega_i\omega_i'+\omega_i'\omega_i)\\
(a^t-1)b&=-\frac{1}{4}(\omega_{p-2i}-\omega_{p-2i}')-\frac{1}{8}
(\omega_i\omega_i'-\omega_i'\omega_i)
\end{aligned} 
\right\} l< 2i \leq(p-1)
\end{equation*}

If $t= p-2i$, then 
\begin{equation*}
\left.\begin{aligned}
(a^t-1)&=-\frac{1}{4}(\omega_{2i}+\omega_{2i}')+\frac{1}{8}
(\omega_i\omega_i'+\omega_i'\omega_i)\\
 (a^t-1)b&=-\frac{1}{4}(\omega_{2i}-\omega_{2i}')-\frac{1}{8}
(\omega_i\omega_i'-\omega_i'\omega_i)
\end{aligned}
 \right\} 0 < 2i \leq l
\end{equation*}
\begin{equation*}
\left.\begin{aligned}
(a^t-1)& =\frac{1}{4}(\omega_{p-2i}+\omega_{p-2i}')+\frac{1}{8}
(\omega_i\omega_i'+\omega_i'\omega_i)\\
 (a^t-1)b& =\frac{1}{4}(\omega_{p-2i}-\omega_{p-2i}')-\frac{1}{8}
(\omega_i\omega_i'-\omega_i'\omega_i)
\end{aligned} 
\right\} l < 2i \leq(p-1)
\end{equation*}
Therefore, $\Gamma(A)= \mbox{span} \{ \omega_i, \omega_i', \omega_i\omega_i',
\omega_i'\omega_i \ | \  1\leq i \leq l\}$. Since the dimension of $\Gamma(A)$
over $F$ is $4l$, the result follows.
\end{proof}
\begin{theo}
The center $Z(1 + \Gamma(A))$ is an
elementary abelian $p$ group of order $p^{n(l + 1)}$ with the set $\{ u_{i,k}^*
u_{i,k}, 1+ \alpha^i \hat{A}b \ | \ 0 \leq i \leq (n-1), 1 \leq k \leq (p-1)
\textrm{and } k \textrm{ is even }\}$ as a basis. 
\end{theo}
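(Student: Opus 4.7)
The plan is to identify $Z(1 + \Gamma(A))$ with $1 + C$, where $C = \{z \in \Gamma(A) : zy = yz \text{ for every } y \in \Gamma(A)\}$, using $(1+x)(1+y) = (1+y)(1+x) \iff xy = yx$. Since $\Gamma(A)$ has $F$-basis $\{a^i - 1,\ (a^i - 1)b \mid 1 \leq i \leq p-1\}$ and each $a^i - 1$ is an $FA$-multiple of $a - 1$, membership in $C$ reduces to commuting with the two elements $a$ and $(a-1)b$.

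Writing $z = f + gb$ with $f, g \in FA$, the condition $za = az$ simplifies to $g(a^{-1} - a) = 0$. Since $a^{-1} - a = -a^{-1}(a - 1)(a + 1)$ and $a + 1$ is a unit of $FA$ (as $p > 2$), this forces $g \in \mathrm{ann}_{FA}(a - 1) = F\hat{A}$, so $g = \mu\hat{A}$ for some $\mu \in F$. Then, using $\hat{A}^* = \hat{A}$, a direct computation gives $zb - bz = (f - f^*)b$, so the relation $(a-1)(zb - bz) = 0$ forces $(a - 1)(f - f^*) = 0$, placing $f - f^*$ in $F\hat{A}$. Since the $a^0$-coefficient of $f - f^*$ vanishes while that of any nonzero element of $F\hat{A}$ does not, we must have $f = f^*$. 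Conversely, every $z = f + \mu\hat{A}b$ with $f = f^*$ is central in $FD_{2p}$, because symmetric elements of $FA$ commute with $b$ and $\hat{A}b$ lies in $Z(FD_{2p})$. Imposing $z \in \Gamma(A)$ requires only $\sum_i \alpha_i = 0$ (writing $f = \sum \alpha_i a^i$), while the coefficient sum for $\mu\hat{A}b$ is $\mu p = 0$ automatically. A parameter count then gives $\dim_F C = l + 1$, so $|Z(1 + \Gamma(A))| = p^{n(l+1)}$.

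To show the listed $n(l+1)$ elements form a basis, I check that each is central and that the two families are independent. Each $u_{i,k}^* u_{i,k}$ with $k$ even lies in $S_*(FA)$ by the basis-of-$S_*(FA)$ lemma above, and such symmetric units of $FA$ commute with $b$, placing them in the center; the $nl$ of them form a basis for the subgroup $S_*(FA)$ of order $p^{nl}$. Each $1 + \alpha^i\hat{A}b$ is central because $\hat{A}b \in Z(FD_{2p})$. Using $(\hat{A}b)^2 = \hat{A}^2 = p\hat{A} = 0$, any product $\prod_i(1 + \alpha^i\hat{A}b)^{s_i}$ collapses to $1 + \bigl(\sum_i s_i\alpha^i\bigr)\hat{A}b$, which equals $1$ only when every $s_i = 0$, since $\{\alpha^i\}_{0 \leq i < n}$ is an $F_p$-basis of $F$. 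These $n$ elements therefore generate a subgroup of order $p^n$ meeting $S_*(FA) \subseteq 1 + FA$ trivially. The subgroup generated by the whole list thus has order $p^{nl} \cdot p^n = p^{n(l+1)} = |Z(1 + \Gamma(A))|$, so it exhausts the center and the listed set is a basis.

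The main obstacle is the center computation in the second paragraph. Two technical facts do the cutting that keeps the dimension at $l+1$: that $a + 1$ is a unit in $FA$ (pinning $g$ to the one-dimensional $F\hat{A}$ rather than some larger symmetric-type space), and that $f - f^*$ has vanishing constant coefficient (upgrading $f - f^* \in F\hat{A}$ to the sharper $f = f^*$). Once these are in hand, the order match against the $S_*(FA)$ basis and the elementary power calculation for $\{1 + \alpha^i\hat{A}b\}$ complete the argument.
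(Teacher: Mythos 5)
Your argument is correct, and it reaches the theorem by a genuinely different route than the paper in the one step that carries all the work: the upper bound on $|Z(1+\Gamma(A))|$. The paper first gets the lower bound $p^{n(l+1)}$ from the containment $Z(FD_{2p})\cap(1+\Gamma(A))\subseteq Z(1+\Gamma(A))$, and then gets the matching upper bound by expanding a general element of $Z(\Gamma(A))$ in the basis $\{\omega_i,\omega_i',\omega_i\omega_i',\omega_i'\omega_i\}$ of Lemma 6, proving a collection of product identities (e.g.\ for $\omega_1\omega_j'\omega_j$) and extracting from $\omega_1x=x\omega_1$, $\omega_1'x=x\omega_1'$ a system of $l$ linear equations in $2l$ variables whose solution space has dimension $l+1$. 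You instead write $z=f+gb$ with $f,g\in FA$, reduce centrality to the two conditions $g(a^{-1}-a)=0$ and $(a-1)(f-f^*)=0$, and resolve these via the structure of $FA\cong F[t]/(t^p)$: since $a+1$ is a unit, $g$ lands in $\mathrm{ann}_{FA}(a-1)=F\hat{A}$, and the vanishing constant coefficient of $f-f^*$ upgrades $f-f^*\in F\hat{A}$ to $f=f^*$. (Your reduction to commuting with just $a$ and $(a-1)b$ is legitimate, since commuting with $a$ gives commuting with all of $FA$, and $\Gamma(A)=\omega(FA)+FA\,(a-1)b$.) This yields the exact description of the centralizer, hence the dimension $l+1$ and the identity $Z(1+\Gamma(A))=Z(FD_{2p})\cap(1+\Gamma(A))$, in a single pass with no linear system and no $\omega$-identities. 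Both proofs finish the same way, matching the order against $S_*(FA)\times(1+F\hat{A}b)$ via Lemma 5 and the collapse $(\hat{A}b)^2=0$. What the paper's route buys is reuse of the $\omega$-basis machinery that it needs anyway for the later structure theorem on $1+\Gamma(A)$; what yours buys is a shorter, more conceptual computation that makes the equality with $Z(FD_{2p})\cap(1+\Gamma(A))$ transparent rather than a consequence of coinciding bounds.
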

\begin{proof}
First we prove that $Z(1 + \Gamma(A)) = Z(FD_{2p}) \cap (1 + \Gamma(A))$.
Let $x \in Z(FD_{2p}) \cap (1 + \Gamma(A))$. It is clear that $x$ is of
the form 
\[x=1+ \sum_{i=1}^{l}\alpha_i(\hat{C_i} -2)+\beta\hat{A}b, \] where
$\alpha_i, \beta  \in F$. Since $Z(FD_{2p})\cap 1 + \Gamma(A)\subseteq Z(1 +
\Gamma(A))$, it implies that $|Z(1 + \Gamma(A))| \geq p^{n(l + 1)}$.
 To show the equality, we compute the dimension of $Z(\Gamma(A))$ over F. 
Take $x \in Z(\Gamma(A))$ such that \[x= \displaystyle \sum_{i=1}^{l}\alpha_i
\omega_i+\sum_{i=1}^{l}\alpha_i' \omega_i'+\sum_{i=1}^{l}\alpha_{ii}
\omega_{i}\omega_i'+ \sum_{i=1}^{l}\alpha_{ii}' \omega_i'\omega_i.\]  
 First, note that if $i = 2k + 1$, where $1 \leq i \leq l$ then
\[\omega_1\omega_i' = \omega_{k +
1}\omega_{k + 1}' - \omega_k\omega_k'\]  \[\omega_i'\omega_1 = \omega_{k +
1}'\omega_{k + 1} - \omega_k'\omega_k.\] For $i = 2k$, where $1 \leq i \leq l$
\[\omega_1\omega_i' = \omega_{l- k}\omega_{l - k}' - \omega_{l - (k
-1)}\omega_{l - (k -1)}' \] \[\omega_i'\omega_1 = \omega_{l- k}'\omega_{l - k} -
\omega_{l - (k -1)}'\omega_{l - (k -1)}. \]
 Next, observe that if $l$ is odd, then 
\[\omega_1\omega_j'\omega_j = 4 \omega_{2j+1}-4 \omega_{2j-1}-8 \omega_1, \
\mbox{
for}\ 1 \leq j \leq \frac{l-1}{2}\]
\[\omega_1\omega_j'\omega_j = -4 \omega_{2l-2j}+4 \omega_{2l-2j+2}-8 \omega_1,\
\mbox{ for}\  \frac{l+3}{2} \leq j \leq l-1 \]
\[\omega_1\omega_{\frac{l+1}{2}}'\omega_{\frac{l+1}{2}}=
-4\omega_{l-1}-4\omega_{l}-8\omega_{1}\]
\[\omega_1\omega_l'\omega_l= 4\omega_{2}-8\omega_{1}\] 
and if $l$ is even, then 
\[\omega_1\omega_j'\omega_j = 4 \omega_{2j+1}-4 \omega_{2j-1}-8 \omega_1, \
\mbox{
for} \ 1 \leq j \leq \frac{l-2}{2}\]
\[\omega_1\omega_j'\omega_j = -4 \omega_{2l-2j}+4 \omega_{2l-2j+2}-8 \omega_1,\
\mbox{ for} \ \frac{l+2}{2} \leq j \leq l-1 \]
\[\omega_1\omega_{\frac{l}{2}}'\omega_{\frac{l}{2}}=
-4\omega_{l}-4\omega_{l-1}-8\omega_{1}\]
\[\omega_1\omega_l'\omega_l= 4\omega_{2}-8\omega_{1}\]

   Now, after substituting these value in the equations $\omega_1 x=x \omega_1$
and $\omega_1' x=x \omega_1'$, we obtain
$\alpha_i=\alpha_i'= 0 \ \forall \ 1 \leq i \leq l$ and the following set of $l$
equations in $2l$
variables. 
  \[\alpha_{ii}'-\alpha_{{i + 1}{i + 1}}'-\alpha_{ii}+\alpha_{{i + 1}{i +
1}}=0\]
  \[-3(\alpha_{11}'- \alpha_{11})-2\displaystyle \sum_{i=2}^l(\alpha_{ii}'-
\alpha_{ii})= 0 \]
  Further, observe that the last equation can be written as  
\[-\displaystyle \sum_{k=1}^{l-1} (2k+1)(\alpha_{kk}'- \alpha_{k+1 k+1}'-
\alpha_{kk}+ \alpha_{k+1 k+1})= 0\] 
This implies that the dimension of the solution space of the above system of
linear equation is $ l+1$ and hence $|Z(1+\Gamma(A))| \leq p^{n(l+1)}$. 
Therefore,
$|Z(1+\Gamma(A))| = p^{n(l+1)}$. Since \[ Z(1+\Gamma(A))= \{1+
\sum_{i=1}^{l}\alpha_i(\hat{C_i} -2)+\beta\hat{A}b \ | \ \alpha_i, \beta  \in F
\}=
S_*(FA) \times (1+F \hat{A}b),\] the result follows from lemma (5).
\end{proof}

  \section*{Structure of $ 1+\Gamma(A)$}
 In this section, we obtain the structure of $1+\Gamma(A)$. We shall use the
following
result:

\begin{theo}[Pavesic \cite{PRM:6530508}]
 Let $(e_1, e_2, \cdots e_n)$ be an ordered $n$-tuple of orthogonal idempotents
in a
unital ring $R$ such that $1=e_1 + e_2 + \cdots + e_n$ and each $e_i$ strongly
preserves
a circle subgroup M of the circle group of R, i.e. $e_iM \subseteq M$ and $Me_1
\subseteq M$, then $(M, \circ)= (L, \circ) \circ (D,
\circ) \circ (U, \circ)$,
where $L= \{m \in M  | ( \forall \ i) \ e_i m = e_i m \overline{e_i}\}$,
$D = \{m \in M | ( \forall \ i) \ e_i m = e_i m {e_i}\}$ and $U = \{m \in
M | ( \forall \ i) \ e_i m = e_i m \underline{e_i}\}$, where
$\overline{e_i}= e_{i+1}+e_{i+2}+ \cdots + e_n$, $\underline{e_i}= e_{1}+e_{2}+
\cdots + e_{i-1}$, $\underline{e_1}=0$ and $\overline{e_n}=0$.                  
 \end{theo}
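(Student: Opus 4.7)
The plan is to carry the claim over to the multiplicative unit group via the isomorphism $m\mapsto 1+m$ from $(R^\circ,\circ)$ onto a subgroup of the unit group of $R$, under which $\circ$ becomes ordinary multiplication. The theorem then asserts that every $1+m\in 1+M$ admits a unique factorization $(1+l)(1+d)(1+u)$ with $l\in L$, $d\in D$, $u\in U$, i.e.\ a noncommutative LDU decomposition in the Peirce coordinates $R=\bigoplus_{i,j}e_iRe_j$. Reading the apparent typo $Me_1\subseteq M$ as $Me_i\subseteq M$, the strongly-preserves hypothesis gives $m_{ij}:=e_ime_j\in M$ for every $m\in M$ and every pair $i,j$. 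In these coordinates $L$, $D$, $U$ are respectively the strictly-upper-triangular, block-diagonal, and strictly-lower-triangular elements of $M$; each is a subgroup of $(M,\circ)$ because multiplication in $1+R$ preserves triangular structure, and because any $l\in L$ is Peirce-nilpotent ($l^n=0$), so $l^\circ=\sum_{k\geq 1}(-1)^kl^k$ is again in $L$.

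Existence is by induction on $n$; the case $n=1$ is trivial. For $n>1$, fix $m\in M$ and eliminate the last row and column. Since $m_{nn}\in M\subseteq R^\circ$, its quasi-inverse $m_{nn}^\circ\in M$ satisfies $(1+m_{nn})^{-1}=1+m_{nn}^\circ$ with $m_{nn}^\circ\in e_nRe_n$. Set $\tilde l:=(me_n)\circ m_{nn}^\circ$ and $\tilde u:=m_{nn}^\circ\circ(e_nm)$; these lie in $M$ because $me_n,e_nm\in M$ (strongly preserves) and $M$ is a group under $\circ$. Using the cancellation $m_{nn}+m_{nn}^\circ+m_{nn}m_{nn}^\circ=0$, a direct computation gives $\tilde l=\sum_{i<n}m_{in}(1+m_{nn})^{-1}\in\sum_{i<n}e_iRe_n\subseteq L$ and $\tilde u=(1+m_{nn})^{-1}\sum_{j<n}m_{nj}\in\sum_{j<n}e_nRe_j\subseteq U$. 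The reduced element $m':=\tilde l^\circ\circ m\circ\tilde u^\circ\in M$ satisfies $1+m'=(1-\tilde l)(1+m)(1-\tilde u)$ (using $\tilde l^2=\tilde u^2=0$); Peirce-checking the last row and column collapses the right-hand side to $e_n+m_{nn}+(\text{piece in }(1-e_n)R(1-e_n))$. Setting $m'':=m_{nn}^\circ\circ m'\in M$ and observing that $m''\in(1-e_n)R(1-e_n)$ (since $m_{nn}^\circ m''=m''m_{nn}^\circ=0$ by orthogonality of Peirce supports), the induction hypothesis applied to $m''$ in the corner ring $(1-e_n)R(1-e_n)$, whose idempotents $e_1,\ldots,e_{n-1}$ also strongly preserve $M\cap(1-e_n)R(1-e_n)$, factors $m''=l''\circ d''\circ u''$; assembling $l=\tilde l\circ l''$, $d=m_{nn}\circ d''$, $u=u''\circ\tilde u$ produces the desired decomposition of $m$.

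Uniqueness is a block-by-block Peirce comparison: if $l\circ d\circ u=l'\circ d'\circ u'$, then $a:=(l')^\circ\circ l\in L$ and $b:=u'\circ u^\circ\in U$ satisfy $(1+a)(1+d)=(1+d')(1+b)$; equating diagonal Peirce blocks forces $d=d'$, then equating $(i,j)$-blocks for $i<j$ gives $a_{ij}(1+d_{jj})=0$ and for $i>j$ gives $(1+d_{ii})b_{ij}=0$, whence invertibility of $1+d_{ii}$ forces $a=b=0$, so $l=l'$ and $u=u'$. The main obstacle is the discipline of keeping every intermediate quantity ($\tilde l$, $\tilde u$, $m'$, $m''$) inside $M$ rather than merely in $R$: the trick is to express each such quantity as a $\circ$-product of elements already known to lie in $M$ (namely $me_n$, $e_nm$, $m_{nn}^\circ$, and so on), letting the subgroup property of $M\subseteq R^\circ$ do the work of preserving $M$ at every step.
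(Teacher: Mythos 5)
The paper does not prove this statement at all: it is quoted as a theorem of Pavesic and used as a black box, so there is no in-paper argument to compare yours against. Judged on its own, your proof is correct and is the natural one: pass to $1+M$ inside the unit group, read $L$, $D$, $U$ as the strictly upper-triangular, block-diagonal and strictly lower-triangular elements of $M$ in the Peirce decomposition $R=\bigoplus_{i,j}e_iRe_j$ (your reading of $Me_1\subseteq M$ as a typo for $Me_i\subseteq M$ is the only one under which the statement makes sense), and perform block LDU elimination by induction on $n$, using the quasi-inverse of $m_{nn}$ to clear the last row and column. The one genuinely delicate point --- that every intermediate element stays in $M$ and not merely in $R$ --- is handled correctly by writing each of $\tilde l$, $\tilde u$, $m'$, $m''$ as a $\circ$-product of elements already known to lie in $M$, which is exactly where the hypotheses $e_iM\subseteq M$, $Me_i\subseteq M$ and the subgroup property of $M$ in $R^{\circ}$ are used. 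Your argument also yields uniqueness of the factorization, which is what the paper actually needs when it later invokes this theorem to present $1+\Gamma(A)$ as a \emph{general} product. Two cosmetic remarks: inclusions such as $\sum_{i<n}e_iRe_n\subseteq L$ conflate subsets of $R$ with subsets of $M$ and should be read as ``strictly upper triangular, hence in $L$ once membership in $M$ is known''; and the claim $m''\in(1-e_n)R(1-e_n)$ is cleaner if derived directly from $(1+m_{nn}^{\circ})(1+m_{nn}+q)=1+q$ with $q=(1-e_n)m'(1-e_n)$, rather than from the asserted vanishing of $m_{nn}^{\circ}m''$, which presupposes the Peirce support of $m''$. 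Neither point affects correctness.
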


\begin{theo}
 If $A$ is the normal subgroup of $D_{2p}$, then $1 + \Gamma(A)$ is the general
product of unitary subgroup with a metabelian group.
\end{theo}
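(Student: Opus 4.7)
The plan is to apply Pavesic's theorem (quoted just above) to the circle subgroup $\Gamma(A)$ using the orthogonal idempotents $e_1=(1+b)/2$ and $e_2=(1-b)/2$ of $FD_{2p}$, which are mutually orthogonal and sum to $1$ because $\mathrm{char}\,F$ is odd, and which strongly preserve $\Gamma(A)$ because $\Gamma(A)$ is a two-sided ideal. Pavesic then yields the internal general (Zappa--Sz\'ep) decomposition
\[
1+\Gamma(A) \;=\; (1+L)(1+D)(1+U),
\]
where $L=e_1\Gamma(A)e_2$, $D=e_1\Gamma(A)e_1\oplus e_2\Gamma(A)e_2$, and $U=e_2\Gamma(A)e_1$.

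Next I analyze each factor. Since $e_1e_2=e_2e_1=0$, we have $L^2=U^2=0$, so $(1+L)$ and $(1+U)$ are elementary abelian $p$-groups. Using $b^*=b$, one checks that $e_i\Gamma(A)e_i=\{se_i : s\in\omega(FA),\,s=s^*\}$, so each corner is commutative and $(1+D)\cong S_*(FA)\times S_*(FA)$ is abelian. The key point is that every element of $D$ commutes with both $e_1$ and $e_2$ (for $d=d_1+d_2$ with $d_i\in e_i\Gamma(A)e_i$, a direct check gives $de_1=d_1=e_1d$, and similarly for $e_2$). Conjugation by $(1+D)$ therefore preserves $e_1\Gamma(A)e_2=L$, so $H:=(1+L)(1+D)$ is a subgroup---namely $(1+L)\rtimes(1+D)$---and hence metabelian.

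To finish, I identify $V_*(FA)$ as a Zappa--Sz\'ep complement to $H$. Orders match: $|V_*(FA)|=p^{nl}$ and $|H|=|L|\cdot|D|=p^{nl}\cdot p^{2nl}=p^{3nl}$, totalling $p^{4nl}=|1+\Gamma(A)|$, so it suffices to show $V_*(FA)\cap H=\{1\}$. Every $v\in V_*(FA)$ has the form $v=1+x$ with $x\in\omega(FA)$, and a short calculation gives $e_2(v-1)e_1=\tfrac{1}{2}(x-x^*)e_1$. By Pavesic's uniqueness, $v\in H$ iff its $(1+U)$-component is trivial, iff $e_2(v-1)e_1=0$, iff $x=x^*$. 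But if $v=1+x$ is both symmetric (so $v^*=v$) and unitary (so $vv^*=1$), then $v^2=vv^*=1$; since $1+\Gamma(A)$ is a $p$-group with $p$ odd, this forces $v=1$. Thus $V_*(FA)\cap H=\{1\}$, and we conclude $1+\Gamma(A)=V_*(FA)\cdot H$, exhibiting $1+\Gamma(A)$ as the desired general product of the unitary subgroup with a metabelian group.

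The main obstacle is the normalization step in paragraph two---checking that $de_i=e_id$ for all $d\in D$ and $i\in\{1,2\}$, which makes $H$ a subgroup. Once this is in hand, the remaining steps (structural description of the corners, the order count, and the symmetric-and-unitary-forces-trivial argument in a $p$-group of odd order) are direct.
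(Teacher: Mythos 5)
Your proof is correct, and its first half---invoking Pavesic's theorem with the idempotents $e_1=\frac{1+b}{2}$, $e_2=\frac{1-b}{2}$ and assembling the metabelian subgroup $W=(1+L)\rtimes(1+D)$---is exactly what the paper does. Where you genuinely diverge is in showing that $V_*(FA)$ is a complement to $W$. The paper reaches this through the centralizer $C_{1+\Gamma(A)}(a)$: it identifies $C_{1+\Gamma(A)}(a)=V_*(FA)\times Z(1+\Gamma(A))$, observes $W\cap C_{1+\Gamma(A)}(a)=Z(1+\Gamma(A))$, and concludes by an order count, so that the theorem leans on the earlier determination of the center $Z(1+\Gamma(A))$. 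You instead compute the corner $e_2(v-1)e_1=\frac{1}{2}(x-x^*)e_1$ for $v=1+x\in V_*(FA)$ and use the uniqueness in Pavesic's factorization to see that $v\in W$ forces $x=x^*$; a symmetric unitary unit satisfies $v^2=1$, hence $v=1$ in a group of exponent $p$ with $p$ odd. Your route is more self-contained (it makes no use of the structure of $Z(1+\Gamma(A))$) and isolates the precise mechanism behind the trivial intersection, while the paper's route gets the same conclusion almost for free once the center has been computed. The order counts agree in both versions: $p^{nl}\cdot p^{3nl}=p^{4nl}=|1+\Gamma(A)|$.
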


\begin{proof}
 Note that $\Gamma(A)$ is a circle group and $\{e_1= \frac{1 + b}{2}, e_2 =
\frac{1 - b}{2}\}$ is a complete set of orthogonal idempotents that strongly
preserve $\Gamma(A)$. Therefore, $(\Gamma(A),o) = (L, o )o(D, o)o (U, o)$, where
$(L, o) = \displaystyle \oplus_{i = 1}^l F\omega_i'$,
$(U, o) = \displaystyle \oplus_{i = 1}^l F\omega_i$ and $(D, o) = \displaystyle
\oplus_{i = 1}^l F\omega_i\omega_i' \displaystyle \oplus_{i = 1}^l
F\omega_i'\omega_i$. Hence, $ 1+ \Gamma(A)$ is a general product of elementary
abelain $p$-groups $(1 + L), (1 + D),$ and $ 1+ U$. Moreover, $1 + D$ normalize
$1 + L$; thus the group $W = (1 + L)\rtimes (1 +
D) $ is a metabelian group. 

If $C_{1 + \Gamma(A)}(a)$ is a centralizer of $a$ in $1 + \Gamma(a)$, then it is
clear that it is an elementary abelian $p$-group of order $p^{np}$. Since
$V_*(FA)$ and $Z(1 + \Gamma(A))$ are in $C_{1 + \Gamma(A)}(a)$ and their
intersection is identity, it follows that $C_{1 + \Gamma(A)}(a) = V_*(FA) \times
Z(1 + \Gamma(A))$. Also note that $W \cap C_{1 + \Gamma(A)}(a) = Z(1 +
\Gamma(A))$. Thus we obtain that $1 + \Gamma(A) = W C_{1 + \Gamma(A)}(a) $ is a general product of $W$ and $V_*(FA)$. 
\end{proof}

If $\alpha$ is the residue class of $x \mod \langle f(x)\rangle$, then the set $
\{1 + \alpha^i\omega_j'\ | \ 0 \leq i \leq n-1,  1 \leq j \leq l\}$ generates
$(1
+ L)$ and $\{1 + \alpha^i\omega_j\ | \ 0 \leq i \leq  n-1, 1 \leq j \leq l\}$
generates $ (1 + U)$. The following lemma provides the structure of $1+D$.
\begin{lemma}
 The set $\{1+\alpha^{i}(a-a^{-1})^{2k}(1-b), 1+\alpha^{i}(a-a^{-1})^{2k}(1+b) \
| \  1 \leq k \leq l \textrm{ and } 0 \leq i \leq (n-1)\}$ forms a basis of
$1+D$.
\end{lemma}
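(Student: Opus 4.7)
The plan is to split $D$ into two mutually annihilating pieces via the fact $(1-b)(1+b)=0$, exhibit an $F$-basis of each piece, and then promote this additive basis to a multiplicative basis of the corresponding subgroup of $1+D$ by an augmentation-filtration argument in the style of Lemma 3.

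First, using the identities $\omega_i\omega_i' = 2(a^i-a^{-i})^2(1-b)$ and $\omega_i'\omega_i = 2(a^i-a^{-i})^2(1+b)$ from Lemma 6, I observe that $D = S(1-b)\oplus S(1+b)$, where $S$ is the $F$-span of $\{(a^i-a^{-i})^2: 1\le i\le l\}$. Because the pairs $\{2i,-2i\}\pmod p$ run through every pair $\{j,-j\}$ in $(\mathbb{Z}/p)^\times$, the set $S$ is exactly the symmetric subspace of $\omega(FA)$, of $F$-dimension $l$. Since $(1-b)(1+b)=0$ the ideals $S(1-b)$ and $S(1+b)$ annihilate each other, so $(1+S(1-b))$ and $(1+S(1+b))$ commute and meet only in $1$, giving $1+D = (1+S(1-b))\times(1+S(1+b))$. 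It therefore suffices to prove the claim for each factor.

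Second, I would verify that $\{(a-a^{-1})^{2k} : 1\le k\le l\}$ is also an $F$-basis of $S$. Writing $a-a^{-1} = a^{-1}(a-1)(a+1)$ and using $(a+1)^{2k}\equiv 4^k \pmod{\omega(FA)}$, one gets $(a-a^{-1})^{2k} \equiv 4^k(a-1)^{2k} \pmod{\omega^{2k+1}}$, so these $l$ symmetric elements lie in distinct layers of the augmentation filtration and are $F$-linearly independent; a dimension count finishes the step. Consequently $\{\alpha^i(a-a^{-1})^{2k}(1-b): 0\le i\le n-1,\,1\le k\le l\}$ is an $F_p$-basis of $S(1-b)$ as an $F_p$-vector space, with exactly $nl = \log_p|1+S(1-b)|$ elements.

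The main step is to lift this additive independence to multiplicative independence in $1+S(1-b)$. Suppose $\prod_{i,k}\bigl(1+\alpha^i(a-a^{-1})^{2k}(1-b)\bigr)^{c_{i,k}} = 1$ with not all $c_{i,k}\equiv 0 \pmod p$, and let $k_0$ be the smallest $k$ for which some $c_{i,k_0}\neq 0$. Using $\Gamma(A)^j = \omega^j\oplus \omega^j b$ (valid since $b\omega=\omega b$) together with $(1+x)^c \equiv 1+cx \pmod{x^2 FD_{2p}}$, every factor with $k>k_0$ is $\equiv 1 \pmod{\Gamma(A)^{2k_0+1}}$, and the surviving terms reduce to
\[
\Bigl(\sum_i c_{i,k_0}\alpha^i\Bigr)\cdot 4^{k_0}\,(a-1)^{2k_0}(1-b) \equiv 0 \pmod{\Gamma(A)^{2k_0+1}}.
\]
Since $(a-1)^{2k_0}(1-b)$ has nonzero image in $\Gamma(A)^{2k_0}/\Gamma(A)^{2k_0+1}$, this forces $\sum_i c_{i,k_0}\alpha^i=0$ in $F$, whence $c_{i,k_0}=0$ for every $i$ by $F_p$-independence of $\{\alpha^i\}$, contradicting the choice of $k_0$. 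The identical argument handles $1+S(1+b)$, and the two bases combine to the stated one. The main obstacle is precisely this filtration step: it is the natural analogue of the argument of Lemma 3, but one must keep simultaneous track of both the $\omega^j$ and $\omega^j b$ components of $\Gamma(A)^j$.
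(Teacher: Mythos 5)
Your proof is correct, and although it shares the paper's skeleton --- splitting $1+D$ as the direct product of its $(1-b)$- and $(1+b)$-components via $(1-b)(1+b)=0$, exactly as in the paper's identity $1+D=(1+\sum_i F\omega_i\omega_i')\times(1+\sum_i F\omega_i'\omega_i)$ --- your treatment of the central step is genuinely different. The paper proves $\sum_i F\omega_i\omega_i'=\mathrm{span}\{(a-a^{-1})^{2k}(1-b)\mid 1\le k\le l\}$ by an explicit induction on $k$: it expands $(a-a^{-1})^{2k}$ by the binomial theorem into the elements $\omega_{k-j}\omega_{k-j}'$ and inverts the resulting triangular system. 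You instead identify both sides with $S(1-b)$, where $S$ is the symmetric part of $\omega(FA)$, and show that $\{(a-a^{-1})^{2k}\}$ is a basis of $S$ from the leading-term congruence $(a-a^{-1})^{2k}\equiv 4^k(a-1)^{2k} \pmod{\omega^{2k+1}(FA)}$, which places the $l$ elements in distinct layers of the augmentation filtration and dispenses with the binomial bookkeeping. You also make explicit a point the paper leaves silent: passing from an additive spanning set of $S(1-b)$ to the claim that the corresponding $nl$ units generate (equivalently, are independent in) the group $1+S(1-b)$ of order $p^{nl}$ is not automatic, since $1+Fx$ need not be a subgroup when $x^2\notin Fx$ (as happens here, $x^2$ having higher filtration degree); your Lemma~3--style argument modulo $\Gamma(A)^{2k_0+1}$ supplies precisely the justification for the paper's asserted product formula. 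The trade-off is that the paper's induction yields explicit change-of-basis relations between $\{\omega_k\omega_k'\}$ and $\{(a-a^{-1})^{2k}(1-b)\}$, whereas your route is shorter and more conceptual but says nothing explicit about the transition matrix.
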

\begin{proof}
First, we show that\[ \displaystyle
\sum_{i=1}^{l} F\omega_{i}\omega_{i}'= span \{(a-a^{-1})^{2k}(1-b)\ | \ 1 \leq k
\leq l\}.\]
For this, we show \[\omega_k\omega_k' \in  span\{(a-a^{-1})^{2k}(1-b)\ | \ 1 \leq
k \leq l\}\] by induction over $k$. \\ The result is trivial for
$k=1$, as $\omega_1\omega_1'= 2(a-a^{-1})^2(1-b)$.\\ Assume the result for
$k-1$.
 Consider $\omega_k \omega_k'$. Notice that \[\omega_k \omega_k'=
2(a^k-a^{-k})^2(1-b)= 2(a^{2k}+a^{-2k}-2)(1-b). \] 
Now, 
\begin{eqnarray*}
(a-a^{-1})^{2k}&= &\displaystyle \sum_{j=0}^{k-1} (-1)^j{2k \choose
j}(a^{2k-2j}+a^{-(2k-2j)})+ (-1)^k {2k \choose k}\\
 &= &\displaystyle \sum_{j=0}^{k-1} (-1)^j{2k \choose
j}(a^{2k-2j}+a^{-(2k-2j)}-2)\\
(a-a^{-1})^{2k}(1-b)&=&\displaystyle \sum_{j=0}^{k-1} (-1)^j \frac{1}{2}{2k
\choose
j}\omega_{k-j}\omega_{k-j}'
\end{eqnarray*}
  Therefore, $\frac{1}{2}\omega_k\omega_{k}'=
(a-a^{-1})^{2k}(1-b)-(\displaystyle \sum_{j=1}^{k-1} (-1)^j \frac{1}{2}{2k
\choose
j}\omega_{k-j}\omega_{k-j}') $. Hence, by the induction hypothesis, we obtain  
\[\omega_k\omega_{k}'\in span\{(a-a^{-1})^{2k}(1-b)\ | \ 1 \leq k \leq l\}.\] 
Therefore,
\[\displaystyle
\sum_{i=1}^{l} F\omega_{i}\omega_{i}'=
\displaystyle \sum_{k=1}^{l}F(a-a^{-1})^{2k}(1-b).\]
 It implies that 
\begin{eqnarray*}
1+ \displaystyle
\sum_{i=1}^{l} F\omega_{i}\omega_{i}'&=&
1+ \displaystyle \sum_{k=1}^{l}F(a-a^{-1})^{2k}(1-b)\\
&=&\displaystyle \prod_{k=1}^{l} \prod_{i=0}^{n-1}
(1+\alpha^{i}(a-a^{-1})^{2k})(1-b)
\end{eqnarray*} 
  Similarly, we can show that
\[1+ \displaystyle
\sum_{i=1}^{l} F\omega_{i}'\omega_{i}=  \displaystyle \prod_{k=1}^{l}
\prod_{i=0}^{n-1}
(1+\alpha^{i}(a-a^{-1})^{2k})(1+b)\]
 Since $ 1+D = (1+ \displaystyle
\sum_{i=1}^{l} F\omega_{i}\omega_{i}')\times (1+ \displaystyle \sum_{i=1}^{l}
F\omega_{i}'\omega_{i})$, the result follows.
 \end{proof}
 \bibliographystyle{plain}
 \bibliography{references}

\end{document}